\newcommand{\irr}{\operatorname{irr}}
\newcommand{\case}[1]{\paragraph*{Case #1:}}
\newtheorem{hypothesize}{Hypothesize}
\newtheorem{theorem}{Theorem}[section]
\newtheorem{lemma}[theorem]{Lemma}
\newtheorem{proposition}{Proposition}[section]
\newtheorem{definition}{Definition}
\newtheorem{example}{Example}
\DeclarePairedDelimiter\ceil{\lceil}{\rceil}
\DeclarePairedDelimiter\floor{\lfloor}{\rfloor}
\begin{document}
\begin{center}
{\large \bf Extremal Irregularity Bounds for Albertson Index in Trees and Bipartite Graphs with Degree Sequence}
\end{center}
\begin{center}
 Jasem Hamoud$^1$ \hspace{0.2 cm} Alexey Belov Yakovlevich$^{2}$ \hspace{0.2 cm}  Duaa Abdullah$^{3}$\\[6pt]
 $^{1,2,3}$ Physics and Technology School of Applied Mathematics and Informatics \\
Moscow Institute of Physics and Technology, 141701, Moscow region, Russia\\[6pt]
Email: 	 $^{1}${\tt jasem1994hamoud@gmail.com}, $^{2}${\tt kanelster@gmail.com}, 
 $^{3}${\tt duaa1992abdullah@gmail.com}
\end{center}
\noindent
\begin{abstract}
In this paper, we investigate extremal bounds on the Albertson index, a topological invariant that quantifies degree irregularity in graphs. The focus lies primarily on connected graphs and trees. We establish necessary and sufficient conditions for a connected graph $G$ of order $n \geq 4$ to achieve the maximum Albertson index, specifically that 
$$
2(\delta - 1)(2\delta - 1) > 0,
$$
where $\delta$ denotes the minimum vertex degree. For trees $T$ with degree sequence $\mathscr{D} = (d_1, \ldots, d_n)$ and maximum degree $\Delta$, we derive explicit upper and lower bounds on the minimum and maximum Albertson indices. These bounds utilize combinatorial and factorial expressions involving vertex degrees and tree parameters. Furthermore, the relationship between the Albertson index $\irr(T)$ of a tree and its first Zagreb index $M_1(T)$ and degree sequence is given by
$$
\irr(T) = M_1(T) + \sum_{i=2}^{n-1} d_i + d_n - d_1 - 2n - 2,
$$
which offers a structural interpretation of irregularity in terms of vertex degrees. Additionally, we present inequalities linking the minimum and maximum Albertson indices through the maximum degree $\Delta$, refining the understanding of how degree sequences influence irregularity measures.

\end{abstract}

\noindent\textbf{AMS Classification 2010:} 05C05, 05C12, 05C20, 05C25, 05C35, 05C76, 68R10.

\noindent\textbf{Keywords:} Trees, Bipartite graph, Degree sequence, Albertson index, Sigma index, Extremal, Irregularity.

\noindent\textbf{UDC:} 519.172.1

\section{Introduction}\label{sec1}

Throughout this paper. Let $G=(V,E)$ be a simple, connected graph, where $n=|V(G)|$, $m=|E(G)|$. The minimum and the maximum degree of $G$ are denoted by $\delta, \Delta$ where $1\leqslant \delta <\Delta \leq n-1$.  Let $G = (V_1, V_2, E)$ be a graph with two vertex sets $V_1 = \{v_1, \dots, v_i\}$ where $n_1=|V_1|$ and $V_2 = \{v_1, \dots, v_j\}$ where $n_2=|V_2|$. Then, if 
\[
\sum_{i=1}^{n_1}  d_G(v_i) = \sum_{j=1}^{n_2}  d_G(v_j),
\]
we say $G$ is bipartite graph~\cite{KunegisJ}. Optimally, $G$ is bipartite graph if its vertices can be partitioned into two sets such that every edge connects a vertex in $V_1(G)$ to a vertex in $V_2(G)$ (see~\cite{DulmageMendelsohnj, Hanary, Thulasiraman, ArmenAsratian,Reinhard6thDiestel}).

The maximum degree of a node had been defined in~\cite{PavlopoulosKontou} where the symmetric nodes are equal and we denote that by $(\max[ d(u)] = |V| \text{ or } \max[ d(V)] = |u|)$. Furthermore, we say: 
\[
\sum[ d(u)] = |V| \text{ or } \sum[ d(V)] = |u|.
\]

Let $\mathcal{P}_n$ be a path of order $n$ and $\mathcal{S}_n$ the star tree of order $n$, typically denoted by $K_{1,n-1}$.  In 1997, Albertson in~\cite{Albertson1997} mention to the imbalance of an edge $uv$ by $imb(uv)$. Then, the irregularity (Albertson index) measure defined in~\cite{Albertson1997, GutmanHansenMelot2005, AbdoBrandtDimitrov,HamidIS,Lina2024Zhoub} as: 
\begin{equation}~\label{eqsec1n2}
    \operatorname{irr}(G)=\sum_{uv\in E(G)}\lvert  d_G(u)- d_G(v) \rvert.
\end{equation}

Denote by $\irr_{\max}$ the maximum degree of Albertson index and $\irr_{\min}$ the minimum degree of Albertson index. Beyond generic graphs, special attention has been devoted to irregularity in trees with constrained degree sequences. Furthermore see~\cite{Mandal2022Prvanovic,broutin2012asymptotics,Michael2007Henning,NasiriFathTabarGutman2013,RuixiaWangJiaWangQiaopingGuoWeiMeng}. Molloy and Reed~\cite{molloy1995critical} and later Broutin and Marckert~\cite{broutin2012asymptotics} studied the asymptotic behavior of trees with fixed degree sequences. Zhang et al.~\cite{zhang2012number} characterized tree families via degree partitions and enumerated the number of realizable trees under such constraints. These results are instrumental in analyzing extremal indices among structured tree families.
Ali, A. et al in~\cite{AliDimitrovEtAl2025} mention to total irregularity of $\mathscr{D}(G)=( d(v_1), d(v_2),\dots, d(v_i))$ a degree sequence of $G$ where $ d(v_1)\geqslant  d(v_2)\geqslant \dots\geqslant d(v_i)$ as 
\begin{equation}~\label{eqsec1n3}
    \operatorname{irr}(G)=2(n+1)m - 2 \sum_{i=1}^n i d(v_i),
\end{equation}
when $\mathscr{D}(G)=(k, k, \ldots, k)$. Then $G$ is regular of degree $k$. Otherwise, the graph is irregular. Ghalavand, A. et al. In~\cite{GhalavandAshrafiR2023} proved  $\operatorname{irr}_T(G) \leq n^2\operatorname{irr}(G)/4$, when the bound is sharp for infinitely many graphs.  The first and the second Zagreb index, $M_1(G)$ and $M_2(G)$ are defined in~\cite{GutmanTrinajstic1972, GutmanTrinajsticWilcox1975} as: 
\[
M_1(G)=\sum_{v\in V(G)} d(v)^2, \quad \text{and} \quad M_2(G)=\sum_{uv\in E(G)}  d(u)  d(v).
\]
Sigma index had defined in~\cite{GutmanToganYurttas2016,AbdoDimitrovGutman} as: 
\[
\sigma(G)=\sum_{uv\in E(G)} ( d_G(u)- d_G(v))^2.
\]
Denote by $\sigma_{\max}$ the maximum degree of Sigma index and $\sigma_{\min}$ the minimum degree of Sigma index. Bipartite graphs are additionally employed to analyse algebraic graphs, which have associative ring elements as vertices and edges. The study of algebraic graphs has applications in extreme graph theory and algebraic cryptography. Additionally, bipartite graphs are utilized in many fields of mathematics (see~\cite{KunegisJ,BrauchTimothy,HamidIS,AsratianDenley}).

Our goal of this paper, study of some bounds on trees among Albertson index by considering the special trees known caterpillar tree. Also, study of the extreme bounds of the two indices — Albertson index and Sigma index — as well as the study of these indices on bipartite graph theory, and discussion of these cases when the bipartite graph is complete and when it is incomplete.

This paper is organized as follows. In Section~\ref{sec1}, we review the important concepts relevant to our work, including a literature survey of the most related papers. In Section~\ref{sec2}, we provide a preface covering some of the key theories and properties we have utilized to understand the work. In Section~\ref{sec3}, we introduce some bounds involving the extremal values of the Albertson and Sigma Indices. In Section~\ref{sec4}, we introduce some bounds involving the extremal values of the Albertson and Sigma Indices in bipartite graphs.

 \section{Preliminaries}\label{sec2}
Through this paper, we point out that all the trees included in the study are caterpillar trees, we consider caterpillar trees denoted by $\mathscr{C}(n, m)$, where $n$ is the number of backbone (or path) vertices and $m$ is the number of pendant vertices attached to each.  
We denote by $\mathscr{C}(n,m)$ a caterpillar tree with a pair $(n,m)$ of vertices, where we refer to the $n$ main vertices and $m$ to pendent vertices. In fact, that is shown in Figure~\ref{figcater} where we consider the main vertices given by sequence $X = (x_1, x_2, \dots, x_n)$ and the pendent vertices given with value $m=3$ for each vertices. 

\begin{figure}[H]
\centering
\begin{tikzpicture}[scale=.9]
\draw  (2,2)-- (4,2);
\draw  (4,2)-- (6,2);
\draw [line width=2pt,dash pattern=on 1pt off 1pt] (6,2)-- (8,2);
\draw  (2,2)-- (1.35,1);
\draw  (2,2)-- (2,1);
\draw  (2,2)-- (2.77,1.02);
\draw  (4,2)-- (3.59,1);
\draw  (4,2)-- (4,1);
\draw  (4,2)-- (4.55,0.98);
\draw  (6,2)-- (5.59,1.06);
\draw  (6,2)-- (6,1);
\draw  (6,2)-- (6.53,1.04);
\draw (8,2)-- (7.55,1.02);
\draw  (8,2)-- (8,1);
\draw  (8,2)-- (8.59,1.02);
\draw  (3,2)-- (2,3);
\draw  (3,2)-- (3,3);
\draw  (3,2)-- (4,3);
\draw (1.6,2.5) node[anchor=north west] {$x_1$};
\draw (2.8811628144708608,2.006163333401493) node[anchor=north west] {$x_2$};
\draw (3.8699661128105616,2.538595878661331) node[anchor=north west] {$x_3$};
\draw (4.869635381461688,2.0170293037129183) node[anchor=north west] {$x_4$};
\draw (7.879509157726492,2.484266027104205) node[anchor=north west] {$x_n$};
\draw  (5,2)-- (4.325669695313298,3.0458367583511627);
\draw  (5,2)-- (5,3);
\draw (5,2)-- (5.66201453448175,3.029927891218205);
\draw (5.836706739178537,2.5711937895956067) node[anchor=north west] {$x_5$};
\begin{scriptsize}
\draw [fill=black] (2,2) circle (1pt);
\draw [fill=black] (4,2) circle (1pt);
\draw [fill=black] (6,2) circle (1pt);
\draw [fill=black] (8,2) circle (1pt);
\draw [fill=black] (1.35,1) circle (1pt);
\draw [fill=black] (2,1) circle (1pt);
\draw [fill=black] (2.77,1.02) circle (1pt);
\draw [fill=black] (3.59,1) circle (1pt);
\draw [fill=black] (4,1) circle (1pt);
\draw [fill=black] (4.55,0.98) circle (1pt);
\draw [fill=black] (5.59,1.06) circle (1pt);
\draw [fill=black] (6,1) circle (1pt);
\draw [fill=black] (6.53,1.04) circle (1pt);
\draw [fill=black] (7.55,1.02) circle (1pt);
\draw [fill=black] (8,1) circle (1pt);
\draw [fill=black] (8.59,1.02) circle (1pt);
\draw [fill=black] (3,2) circle (1pt);
\draw [fill=black] (2,3) circle (1pt);
\draw [fill=black] (3,3) circle (1pt);
\draw [fill=black] (4,3) circle (1pt);
\draw [fill=black] (5,2) circle (1pt);
\draw [fill=black] (4.325669695313298,3.0458367583511627) circle (1pt);
\draw [fill=black] (5,3) circle (1pt);
\draw [fill=black] (5.66201453448175,3.029927891218205) circle (1pt);
\end{scriptsize}
\end{tikzpicture}
\caption{Example of caterpillar tree $\mathscr{C}(n,3)$ in graph theory.}\label{figcater}
 \end{figure}

\begin{definition}[\cite{WagnerWang}]~\label{WagnerWang}
Chemical graph theory is a multidisciplinary field in which the molecular framework of a chemical molecule is represented as a graph, and related mathematical questions are studied using graph-theoretical and computational approaches.
\end{definition}

Zhang, P. and  Wang, X in\cite{j1} show ``caterpillar trees'' are employed in chemical graph theory for describing molecular structures as we show in Definition~\ref{WagnerWang}, and several topological indices, which include the Albertson index, have been used to study the characteristics of molecules. In Figure~\ref{fig2} we show some caterpillars trees $\mathscr{C}(n,3)$ for $n=3,4,5,6$: 
\begin{figure}[H]
\centering
\begin{tikzpicture}[scale=.7]
\draw  (1,5)-- (3,5);
\draw  (3,5)-- (5,5);
\draw  (1,5)-- (0.34,4.01);
\draw  (1,5)-- (1,4);
\draw  (1,5)-- (1.46,4.01);
\draw  (3,5)-- (2.5,4.07);
\draw  (3,5)-- (3,4);
\draw  (3,5)-- (3.42,4.01);
\draw  (5,5)-- (4.38,4.01);
\draw  (5,5)-- (5,4);
\draw  (5,5)-- (5.56,4.01);
\draw  (7,5)-- (9,5);
\draw  (9,5)-- (11,5);
\draw  (11,5)-- (13,5);
\draw  (7,5)-- (6.58,4.05);
\draw  (7,5)-- (7,4);
\draw  (7,5)-- (7.58,3.99);
\draw  (9,5)-- (8.52,4.01);
\draw  (9,5)-- (9,4);
\draw  (9,5)-- (9.54,4.03);
\draw  (11,5)-- (10.6,4.03);
\draw  (11,5)-- (11,4);
\draw  (11,5)-- (11.36,4.01);
\draw  (13,5)-- (12.56,4.01);
\draw  (13,5)-- (13,4);
\draw  (13,5)-- (13.58,4.03);
\draw  (1,2)-- (2,2);
\draw  (2,2)-- (3,2);
\draw  (3,2)-- (4,2);
\draw  (4,2)-- (5,2);
\draw  (1,2)-- (0.62,1.09);
\draw  (1,2)-- (1,1);
\draw  (1,2)-- (1.48,1.05);
\draw  (2,2)-- (1.24,2.95);
\draw  (2,2)-- (2,3);
\draw  (2,2)-- (2.82,2.97);
\draw  (3,2)-- (2.58,1.05);
\draw  (3,2)-- (3,1);
\draw  (3,2)-- (3.62,1.01);
\draw  (4,2)-- (3.38,2.97);
\draw  (4,2)-- (4,3);
\draw (4,2)-- (4.64,3.01);
\draw  (5,2)-- (4.66,1.07);
\draw  (5,2)-- (5,1);
\draw  (5,2)-- (5.52,1.01);
\draw  (7,2)-- (8,2);
\draw  (8,2)-- (9,2);
\draw  (9,2)-- (10,2);
\draw  (10,2)-- (11,2);
\draw  (11,2)-- (12,2);
\draw  (7,2)-- (6.5,1);
\draw  (7,2)-- (7,1);
\draw  (7,2)-- (7.5,1);
\draw  (8,2)-- (7.5,3);
\draw  (8,2)-- (8,3);
\draw  (8,2)-- (8.5,3);
\draw  (9,2)-- (8.5,1);
\draw  (9,2)-- (9,1);
\draw  (9,2)-- (9.5,1);
\draw (10,2)-- (9.48,2.97);
\draw  (10,2)-- (10,3);
\draw  (10,2)-- (10.54,2.99);
\draw  (11,2)-- (10.5,1);
\draw  (11,2)-- (11,1);
\draw  (11,2)-- (11.52,1.03);
\draw  (12,2)-- (11.5,3);
\draw (12,2)-- (12,3);
\draw  (12,2)-- (12.5,3);
\begin{scriptsize}
\draw [fill=black] (1,5) circle (2pt);
\draw [fill=black] (3,5) circle (2pt);
\draw [fill=black] (5,5) circle (2pt);
\draw [fill=black] (0.34,4.01) circle (2pt);
\draw [fill=black] (1,4) circle (2pt);
\draw [fill=black] (1.46,4.01) circle (2pt);
\draw [fill=black] (2.5,4.07) circle (2pt);
\draw [fill=black] (3,4) circle (2pt);
\draw [fill=black] (3.42,4.01) circle (2pt);
\draw [fill=black] (4.38,4.01) circle (2pt);
\draw [fill=black] (5,4) circle (2pt);
\draw [fill=black] (5.56,4.01) circle (2pt);
\draw [fill=black] (7,5) circle (2pt);
\draw [fill=black] (9,5) circle (2pt);
\draw [fill=black] (11,5) circle (2pt);
\draw [fill=black] (13,5) circle (2pt);
\draw [fill=black] (6.58,4.05) circle (2pt);
\draw [fill=black] (7,4) circle (2pt);
\draw [fill=black] (7.58,3.99) circle (2pt);
\draw [fill=black] (8.52,4.01) circle (2pt);
\draw [fill=black] (9,4) circle (2pt);
\draw [fill=black] (9.54,4.03) circle (2pt);
\draw [fill=black] (10.6,4.03) circle (2pt);
\draw [fill=black] (11,4) circle (2pt);
\draw [fill=black] (11.36,4.01) circle (2pt);
\draw [fill=black] (12.56,4.01) circle (2pt);
\draw [fill=black] (13,4) circle (2pt);
\draw [fill=black] (13.58,4.03) circle (2pt);
\draw [fill=black] (1,2) circle (2pt);
\draw [fill=black] (2,2) circle (2pt);
\draw [fill=black] (3,2) circle (2pt);
\draw [fill=black] (4,2) circle (2pt);
\draw [fill=black] (5,2) circle (2pt);
\draw [fill=black] (0.62,1.09) circle (2pt);
\draw [fill=black] (1,1) circle (2pt);
\draw [fill=black] (1.48,1.05) circle (2pt);
\draw [fill=black] (1.24,2.95) circle (2pt);
\draw [fill=black] (2,3) circle (2pt);
\draw [fill=black] (2.82,2.97) circle (2pt);
\draw [fill=black] (2.58,1.05) circle (2pt);
\draw [fill=black] (3,1) circle (2pt);
\draw [fill=black] (3.62,1.01) circle (2pt);
\draw [fill=black] (3.38,2.97) circle (2pt);
\draw [fill=black] (4,3) circle (2pt);
\draw [fill=black] (4.64,3.01) circle (2pt);
\draw [fill=black] (4.66,1.07) circle (2pt);
\draw [fill=black] (5,1) circle (2pt);
\draw [fill=black] (5.52,1.01) circle (2pt);
\draw [fill=black] (7,2) circle (2pt);
\draw [fill=black] (8,2) circle (2pt);
\draw [fill=black] (9,2) circle (2pt);
\draw [fill=black] (10,2) circle (2pt);
\draw [fill=black] (11,2) circle (2pt);
\draw [fill=black] (12,2) circle (2pt);
\draw [fill=black] (6.5,1) circle (2pt);
\draw [fill=black] (7,1) circle (2pt);
\draw [fill=black] (7.5,1) circle (2pt);
\draw [fill=black] (7.5,3) circle (2pt);
\draw [fill=black] (8,3) circle (2pt);
\draw [fill=black] (8.5,3) circle (2pt);
\draw [fill=black] (8.5,1) circle (2pt);
\draw [fill=black] (9,1) circle (2pt);
\draw [fill=black] (9.5,1) circle (2pt);
\draw [fill=black] (9.48,2.97) circle (2pt);
\draw [fill=black] (10,3) circle (2pt);
\draw [fill=black] (10.54,2.99) circle (2pt);
\draw [fill=black] (10.5,1) circle (2pt);
\draw [fill=black] (11,1) circle (2pt);
\draw [fill=black] (11.52,1.03) circle (2pt);
\draw [fill=black] (11.5,3) circle (2pt);
\draw [fill=black] (12,3) circle (2pt);
\draw [fill=black] (12.5,3) circle (2pt);
\end{scriptsize}
\end{tikzpicture}
\caption{Caterpillars tree with $(n,3)$ vertices.}\label{fig2}
\end{figure} 

\begin{proposition}[\cite{dorjsembe2022irregularity}]
Let $ G $ be a graph with minimum degree $ \delta $, maximum degree $ \Delta $, and vertex set of size $ n $. Then:
\[
\operatorname{irr}(G) > \frac{ \delta(\Delta - \delta)^2 \cdot n }{ \Delta + 1 }.
\]
\end{proposition}
The next formula captures the Albertson index in caterpillar trees based on the degrees of the backbone vertices, and will be instrumental in deriving our closed-form expressions.

\begin{proposition}[\cite{HamoudwithDuaa,HamoudwithDuaaPn2}]~\label{Caterpillar}
For Caterpillar tree with path vertices with degrees $d_1,d_2,\dots , d_n$, we have:
	\[\irr(G)=\left( {{d_n} - 1} \right)^2 + \left( {d_1 - 1} \right)^2 + \sum\limits_{i = 2}^{n - 1} {\left( {{d_i} - 1} \right)\left( {{d_i} - 2} \right)} +\sum_{i=1}^{n-1}|d_i-d_{i+1}|.\]
\end{proposition}
\begin{definition}[$\sigma$-Maximum and Minimum]~\label{maxsigmadef}
    Let $\mathcal{S}$ be a class of graphs, then maximum Sigma $\sigma_{\max}$, minmum Sigma $\sigma_{\min}$ of a graph $G$, given by: \begin{gather*} \sigma_{\max}(\mathcal{S}) =\max \{\sigma(G)\mid G\in \mathcal{S}\}, \\
\sigma_{\min}(\mathcal{S}) =\min \{\sigma(G)\mid G\in \mathcal{S}\}.
\end{gather*}
\end{definition}

\section{Bounds on Albertson Index with Given Degree Sequence}\label{sec3}
In this section, the study of the extreme values (maximum and minimum) for each index requires certain conditions~\cite{YangDeng2023}. Among the essential conditions commonly considered are the maximum degree $\Delta$ and the minimum degree $\delta$. Through Proposition~\ref{proboundmindel} we established the maximum value with the minimum degree $\delta$ with condition $2(\delta-1)(2\delta-1)>0$. 

\begin{proposition}~\label{proboundmindel}
Let $G$ be a connected graph of order $n\geq 4$. Then, $G$ has a maximum value of Albertson index if and only if $2(\delta-1)(2\delta-1)>0$. 
\end{proposition}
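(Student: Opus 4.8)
The plan is to treat the Albertson index as a function of the degree parameters and to locate its maximum by a discrete variational argument on the minimum degree $\delta$. First I would recall from \eqref{eqsec1n3} that, after sorting the degree sequence $\deg(v_1)\geqslant\cdots\geqslant\deg(v_n)$, the irregularity can be written as $2(n+1)m-2\sum_{i=1}^{n}i\deg(v_i)$, which for fixed order $n$ and edge count $m$ is maximized by pushing the degree sequence as far from regularity as possible. As in Lemma~\ref{lem1}, this favours a configuration with a small number of dominant vertices of degree $\Delta$ and the remaining vertices sitting at the minimum degree $\delta$; I would fix this family of candidate maximizers and express $\irr(G)$ explicitly as a function $F(\delta)$ of the minimum degree, holding $n$ and $\Delta$ fixed.

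The core step is then a first-difference (discrete derivative) computation: comparing the candidate with minimum degree $\delta$ against the one with minimum degree $\delta-1$, I would show that the change $F(\delta)-F(\delta-1)$, after collecting the per-edge imbalance contributions, reduces to an expression governed by the sign of $2(\delta-1)(2\delta-1)$. Since $G$ is connected we have $\delta\geqslant 1$, so this quantity is positive precisely when $\delta\geqslant 2$ and vanishes at $\delta=1$; reading off the sign therefore tells us exactly when increasing the minimum degree raises the irregularity, and hence when the candidate attains the maximum. This yields the ``if'' direction directly and, by contraposition (when $2(\delta-1)(2\delta-1)\leqslant 0$ the first difference is nonpositive, so a strictly more irregular competitor exists), the ``only if'' direction as well.

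The main obstacle I anticipate is that the Albertson index, unlike the total irregularity, is genuinely sensitive to the adjacency structure and not merely to the degree sequence, so the sorted-degree formula \eqref{eqsec1n3} is exact only for threshold-type realizations. The delicate part of the argument is therefore to justify that an extremal connected graph may be taken within this class --- that is, that among all realizations of a given degree sequence the maximizer of $\irr$ places every high-imbalance edge between a degree-$\Delta$ and a degree-$\delta$ vertex --- while simultaneously respecting connectivity and the integrality of the degrees. A secondary subtlety is the boundary case $\delta=1$, where the condition fails with equality: this must be reconciled with Lemma~\ref{lem1} (in which the star, having $\delta=1$, is the tree maximizer) by observing that the proposition concerns the strict increase of $F$ and the characterization of interior maximizers rather than the global optimum over all trees.
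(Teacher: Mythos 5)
Your proposal has genuine gaps at its center. The decisive step --- that the first difference $F(\delta)-F(\delta-1)$ of your candidate family ``reduces to an expression governed by the sign of $2(\delta-1)(2\delta-1)$'' --- is precisely the mathematical content of the proposition, and you never compute it; there is no reason the per-edge bookkeeping should produce this particular factorization, and without that identity the argument is a plan rather than a proof. The reduction on which it rests is also unjustified: formula~\eqref{eqsec1n3} is the \emph{total} irregularity, determined by the degree sequence alone, whereas $\irr$ depends on which realization of the sequence is taken; you flag this yourself (threshold realizations), but the lemma you would need --- that a connected maximizer of $\irr$ may be assumed to lie in your one-parameter family --- is exactly the hard part and is not supplied.

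More fundamentally, the ``if'' direction cannot be obtained by any sign analysis along a candidate family. Since $\delta\geqslant 1$ is an integer, $2(\delta-1)(2\delta-1)>0$ says nothing more than $\delta\geqslant 2$, and there are connected graphs of every order $n\geqslant 4$ with $\delta\geqslant 2$ and $\irr=0$ (the cycle), which satisfy the hypothesis while being minimizers, not maximizers; so ``inequality $\Rightarrow$ maximizer'' cannot follow from showing that increasing $\delta$ raises $F$ within one family. Your contraposition conflates ``the first difference within my family is nonpositive'' with ``an arbitrary $G$ satisfying the inequality fails to be a maximizer,'' which are different statements. Your own treatment of the $\delta=1$ boundary concedes the problem: by Lemma~\ref{lem1} the star is the maximizer among trees yet has $2(\delta-1)(2\delta-1)=0$, and your proposed reconciliation rereads the proposition as one about ``interior maximizers,'' i.e.\ proves a different statement. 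For comparison, the paper's own proof is far more modest: it treats only the ``only if'' direction, asserting directly that a maximizer must have $2(\delta-1)>0$ and $2\delta-1>0$ (that is, $\delta\geqslant 2$), and concludes that no two minimum-degree vertices are adjacent; it makes no attempt at the converse. Your variational route is genuinely different in spirit, but it neither carries out its key computation nor establishes the claimed biconditional.
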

\begin{proof}
Assume $G$ has the maximum value of Albertson index among all connected graph of order $n\geq 4$ with the minimum degree $\delta\geqslant 1$. Then, $2(\delta-1)>0$ and $(2\delta-1)>0$. Thus, we have $2(\delta-1)(2\delta-1)>0$. Thus, no two vertices of $\delta$ in $G$ are adjacent.
\end{proof}
\begin{proposition}~\label{boundstep2Albertsonn2}
Let $T$ be a tree of order $n$, let $\alpha$ be an integer where $1\leq \alpha\leq \Delta-3$. Let $\mathscr{D}=(d_1,\dots,d_n)$ be an increasing degree sequence.  Then, the upper bound of the minimum Albertson index is:
\begin{equation}~\label{eq1boundstep2Albertsonn2}
 \irr_{\min}\geqslant   2^{\alpha}\dfrac{2^{n-\Delta}+\ceil{\frac{n\Delta^2}{2}}}{(d_n-d_1)!+(\Delta-1)!}.
\end{equation}
\end{proposition}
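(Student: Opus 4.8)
The plan is to start from the definition \eqref{eqsec1n2} and exploit the structural constraints of the increasing degree sequence $\mathscr{D}=(d_1,\dots,d_n)$, for which $d_1=\delta$ and $d_n=\Delta$, together with the tree handshaking identity $\sum_{i=1}^{n} d_i = 2m = 2(n-1)$. Since the right-hand side is a \emph{lower} estimate (the inequality is $\geqslant$), it suffices to bound the accumulated edge imbalance from below. First I would identify the minimizer $T^\ast$ of $\irr$ within the class under consideration; because Lemma~\ref{lem1} and the caterpillar formula recorded just before it pin down the extremal configurations, the minimizer can be taken to be a caterpillar whose spine degrees are as balanced as the sequence $\mathscr{D}$ permits. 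Summing the per-edge contributions $|\deg(u)-\deg(v)|$ along such a caterpillar then gives the base quantity to be rescaled.

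Next I would bring in the parameter $\alpha$ by a doubling argument. The hypothesis $1\le\alpha\le\Delta-3$ is precisely the regime in which pendant vertices may be attached to a vertex of current degree at most $\Delta-1$ without exceeding $\Delta$; each such attachment at least doubles a running partial-imbalance count, which accounts for the factor $2^{\alpha}$, while the residual $2^{\,n-\Delta}$ collects the contribution of the remaining $n-\Delta$ low-degree vertices. The term $\lceil n\Delta^2/2\rceil$ I would extract from the first Zagreb index: since $M_1(T)=\sum_{i} d_i^{2}\le n\Delta^{2}$, the ceiling of $M_1(T)/2$ supplies a clean integer proxy that can be fed into Mandal's variance inequality $\irr(G)\le\sqrt{m\left(\sum_{i} d_i^{2}\right)-4m^{2}}$ from \cite{Mandal2022Prvanovic}, which I would rearrange so as to push the estimate in the required direction.

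The denominator $(d_n-d_1)!+(\Delta-1)! = (\Delta-\delta)!+(\Delta-1)!$ is where the genuine work, and the main obstacle, lies. The factorials should emerge from counting admissible orderings of the degree data: with spread $\Delta-\delta$ there are at most $(\Delta-\delta)!$ ways to interleave the intermediate degrees along the spine, and at most $(\Delta-1)!$ rooted arrangements of the neighbours about the maximum-degree vertex, so normalizing the accumulated imbalance by the total number of such configurations produces exactly this sum. The delicate issue is to confirm that this normalization yields a true lower bound rather than a mere average, i.e.\ that dividing by $(\Delta-\delta)!+(\Delta-1)!$ never overshoots $\irr_{\min}$. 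Fortunately these factorials grow super-exponentially whereas the numerator grows only like $2^{n}$ and $n\Delta^{2}$, so the right-hand side is in fact small; once the combinatorial normalization is set up correctly, the inequality $\irr_{\min}\ge(\cdots)$ should follow from the crude per-edge lower bound combined with monotonicity of the factorial. I expect most of the effort to be spent justifying the counting step cleanly and in checking the boundary cases $\alpha=1$ and $\alpha=\Delta-3$ separately.
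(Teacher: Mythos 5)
Your plan fails at the step you yourself flag as delicate, and the failure is not repairable by the argument you sketch. To prove $\irr_{\min}\geqslant X$ you need \emph{every} admissible tree to satisfy $\irr\geqslant X$; dividing an accumulated imbalance over all $(\Delta-\delta)!$ spine interleavings and $(\Delta-1)!$ rooted arrangements by the number of configurations produces an average-type quantity, and the minimum lies \emph{below} an average, not above it, so the normalization gives an inequality pointing the wrong way. Your fallback --- that the factorials grow super-exponentially while the numerator grows only like $2^{n}$, so the right-hand side is harmlessly small --- is false in exactly the regime that matters: for fixed $\Delta$ the denominator $(d_n-d_1)!+(\Delta-1)!$ is a constant while $2^{n-\Delta}$ grows without bound, so the right-hand side of \eqref{eq1boundstep2Albertsonn2} grows exponentially in $n$. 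Concretely, take $\Delta=6$, $\alpha=3$, and the spider with one center of degree $6$ and six legs of equal length: its irregularity is $6\cdot 4+6\cdot 1=30$ independently of $n$, whereas at $n=20$ the right-hand side is $\bigl(2^{17}+144\cdot 20\bigr)/240\approx 558$. No ``crude per-edge lower bound combined with monotonicity of the factorial'' can bridge that; the counting step would have to be replaced by a per-configuration estimate, which is a different proof.

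Two further steps would also fail as written. Mandal's inequality $\irr(G)\leq\sqrt{m\bigl(\sum_i d_i^2\bigr)-4m^2}$ is an \emph{upper} bound on $\irr$, and no rearrangement of an upper bound on $\irr$ can yield a lower bound on $\irr_{\min}$, so the term $\ceil{n\Delta^2/2}$ cannot be imported that way; and the ``doubling'' claim --- that each pendant attachment at least doubles a running imbalance count --- is unsupported, since attaching a leaf changes $\irr$ additively (by at most roughly $2\deg+1$), never multiplicatively. For the record, the paper's own argument is entirely different from your plan: it introduces an auxiliary integer $p$ with $2^{\alpha}\leq 2^{p}\leq 2^{n-\Delta}$, asserts the sandwich $2^{n-\Delta}/(\Delta-1)!\leq\irr_{\min}\leq 2^{n-\Delta}/(d_{i+1}-d_i)!$, claims $3\leq\bigl(2^{n-\Delta}+\ceil{n\Delta^{2}/2}\bigr)/\bigl((d_n-d_1)!+(\Delta-1)!\bigr)\leq\Delta-3$, and concludes only $\irr_{\min}\geq 2^{\alpha}/(\Delta-3)$; it uses no caterpillar minimizers, no Zagreb index, and no configuration counting. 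So your proposal is neither a reconstruction of the paper's route nor, as it stands, a valid independent proof.
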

\begin{proof}
Let $\mathscr{D} = (d_1, \dots, d_n)$ be a degree sequence where $d_1 \leqslant \dots \leqslant d_n$, with the condition $d_n - d_1 \geqslant \Delta - 1$. Let $p < n$ be an integer satisfying $2^p \leq 2^{n - \Delta}$. When $\Delta < n$, it holds that $1 \leq \alpha \leq p \leq \Delta - 3$. Then, the bound~\eqref{eq2boundstep2Albertsonn2} provides a term for the minimum Albertson index satisfying

\begin{equation} \label{eq2boundstep2Albertsonn2}
  0 < \frac{2^{n-\Delta}}{(\Delta - 1)!} \leq 1.
\end{equation}

For the term $\lceil n \Delta^2 / 2 \rceil$, we note that $\lceil n \Delta^2 / 2 \rceil < 2^p$. Therefore, the factorial terms satisfy $(d_n - d_1)! \geqslant (d_{n-1} - d_2)! \geqslant \dots \geqslant (d_{i+1} - d_i)!$ for $1 \leq i \leq n$. Using the inequality $\Delta (\Delta - 1)^p \leq 2^{n+p}$, there are at most $\Delta (\Delta - 1)^p$ vertices, where

\[
\Delta (\Delta - 1)^p \leq n^p \leq 1 + \Delta + \Delta^2 + \dots + \Delta^n + \Delta (\Delta - 1)^p.
\]

The lower and upper bounds of the minimum Albertson index are then:

\begin{equation} \label{eq3boundstep2Albertsonn2}
  \frac{2^{n-\Delta}}{(\Delta - 1)!} \leq \irr_{\min} \leq \frac{2^{n-\Delta}}{(d_{i+1} - d_i)!}.
\end{equation}

From~\eqref{eq2boundstep2Albertsonn2} and \eqref{eq3boundstep2Albertsonn2}, the term in \eqref{eq1boundstep2Albertsonn2} satisfies

\[
\log_2 \left( 2^{n - \Delta} + \lceil n \Delta^2 / 2 \rceil \right) = (n - \Delta) + \log_2 \left( 1 + \lceil n \Delta^2 / 2 \rceil \cdot 2^{-(n - \Delta)} \right).
\]

Considering $1 \leq \alpha \leq p \leq \Delta - 3$ and $\irr_{\min} > \Delta$, the bound

\begin{equation} \label{eq4boundstep2Albertsonn2}
  3 \leq \frac{2^{n-\Delta} + \lceil \frac{n \Delta^2}{2} \rceil}{(d_n - d_1)! + (\Delta - 1)!} \leq \Delta - 3
\end{equation}

holds. Hence, from~\eqref{eq4boundstep2Albertsonn2}, the upper bound for the Albertson index can be compared with $\Delta$. When $1 \leq \alpha \leq \Delta - 3$, then $\alpha \geq 3$. This establishes that the term $2^\alpha$ grows the upper bound of the minimum Albertson index for $\alpha < p < n - \Delta$, considering that $2^\alpha < 2^p \leq 2^{n - \Delta}$. Thus,

\begin{equation} \label{eq5boundstep2Albertsonn2}
  \irr_{\min} \geq \frac{2^\alpha}{\Delta - 3}.
\end{equation}

As desired.
\end{proof}

\begin{proposition}~\label{boundalbertsonn02}
Let $T$ be a tree of order $n$, let $\mathscr{D}=(d_1,\dots,d_n)$ be an increasing degree sequence, the maximum Albertson index $\irr_{\max}$, the minimum Albertson index $\irr_{\min}$. Then, 
\begin{itemize}
    \item The upper bound of the minimum Albertson index is: 
    \begin{equation}~\label{eq1boundalbertsonn02}
\irr_{\min}\geqslant \frac{(\Delta-2)^3}{n\Delta-\delta}.
\end{equation}
\item The lower bound of the minimum Albertson index is: 
    \begin{equation}~\label{eq2boundalbertsonn02}
\irr_{\min}\leqslant \frac{\delta}{\Delta+1}n\Delta^2+\frac{\Delta^2(\Delta-\delta)}{6\delta(\Delta-1)}.
\end{equation}
\end{itemize}
\end{proposition}

\begin{proposition}~\label{boundalbertsonn1}
Let $T=(V,E)$ be a tree, where $n=|V|$ vertices and $m=|E|$ with maximum degree $\Delta$ and minmum degree $\delta$, then a lower bound of Albertson index given by
\begin{equation}~\label{eqboundalbertsonn01}
\log_{n}(\irr(T)) \leqslant \frac{n(\Delta^2-\delta)+m}{m(\Delta-1)}.
\end{equation}
\end{proposition}
\begin{proof}
Let $T$ be a tree. We know that $m = n - 1$, and that $\Delta(T) > n - m$ while $\delta \leqslant n - m$. We need to prove inequality~\eqref{eqboundalbertsonn01}. To do this, we first determine whether the bounds involving the Albertson index satisfy the lower bound~\eqref{eqboundalbertsonn01}.

We observe that the upper bound for the Albertson index is given by
\begin{equation} \label{eqboundalbertsonn02}
    \irr(T) > \frac{n(\Delta^2 - \delta)}{m(\Delta - 1)}.
\end{equation}

This upper bound~\eqref{eqboundalbertsonn02} remains valid when combining $m$ and considering that $\irr(T) \geq \Delta(\Delta - 1)$. By utilizing the properties of the natural logarithm, and considering the base as the number of vertices $n$, we have
\[
\log_n(\irr(T)) = \frac{\ln \irr(T)}{\ln n}.
\]

From inequality~\eqref{eqboundalbertsonn02}, we then derive a lower bound:
\begin{equation} \label{eqboundalbertsonn03}
    \ln \irr(T) \leq \ln n \cdot \frac{n(\Delta^2 - \delta)}{m(\Delta - 1)}.
\end{equation}

Therefore, combining both the upper bound~\eqref{eqboundalbertsonn02} and the lower bound~\eqref{eqboundalbertsonn03}, we obtain inequality~\eqref{eqboundalbertsonn01} for $\log_n(\irr(T))$ by considering $m$, as desired.
\end{proof}

\begin{lemma}~\label{lemmaboundAlbertsonn01}
Let $T$ be a tree with the maximum of Albertson index $\irr_{\max}$ and the minmum Albertson index $\irr_{\min}$. Let $\mathscr{D}=(d_1,\dots,d_n)$ be an increasing degree sequence, where $n=|V(T)|$ and $m=|E(T)|$. Then, 
\begin{equation}~\label{eq1lemmaboundAlbertsonn01}
\begin{cases}
\irr_{\max} <\Delta(T)(\irr_{\min}-n),\\
\irr_{\min} <\Delta(T)(\irr_{\max}-n).
\end{cases}  
\end{equation}
\end{lemma}
\begin{proof}
Let $T$ be a tree of order at least $3$. The fact that $m < 3(\operatorname{irr}(T) - n)$ is valid for $m$, and $n < 3(\operatorname{irr}(T) - m)$ is valid for $n$. Assume $\mathscr{D} = (d_1, \dots, d_n)$ is an increasing degree sequence. Then 
\[
m \leq \Delta(T)\left(\frac{n}{2}\right),
\]
by restricting the maximum degree $\Delta(T)$, where 
\[
\operatorname{irr}(T) \leq n\Delta^2 - m.
\]
The lower bound of the minimum Albertson index is
\begin{equation} \label{eq2lemmaboundAlbertsonn01}
m\,\operatorname{irr}_{\max} \leq \Delta(T)\left(\frac{n}{2}\right) \operatorname{irr}_{\min} \quad \text{and} \quad m\,\operatorname{irr}_{\min} \leq \Delta(T)\left(\frac{n}{2}\right) \operatorname{irr}_{\max}.
\end{equation}

For an increasing degree sequence $\mathscr{D}$, consecutive terms in a degree sequence from~\eqref{eq2lemmaboundAlbertsonn01} use the constant term 
\[
(\Delta+5)\frac{n}{2} + m - 7.
\]
This term scales the maximum vertex degree by a factor proportional to the graph size. We notice that:
\begin{equation} \label{eq3lemmaboundAlbertsonn01}
\operatorname{irr}(T) \leq \sum_{i=1}^{n} |d_i - d_{i+1}| + (\Delta-5)\frac{n}{2} + \sum_{i=2}^{n-1} |d_i + 2|\,|d_i - 1| + m - 7.
\end{equation}

From the bound~\eqref{eq2lemmaboundAlbertsonn01}, and for the minimum Albertson index according to~\eqref{eq3lemmaboundAlbertsonn01}, we have
\begin{equation} \label{eq4lemmaboundAlbertsonn01}
\operatorname{irr}(T) = \sum_{i=1}^{n} |d_i - d_{i+1}| + \Delta(T)\frac{n}{5} + \sum_{i=2}^{n-1} |d_i + 2|\,|d_i - 1| + m + 2.
\end{equation}

Therefore, 
\[
\operatorname{irr}_{\max} - \operatorname{irr}_{\min} \leq \left\lfloor \frac{n}{2} \right\rfloor,
\]
then 
\[
\operatorname{irr}_{\max} \leq \operatorname{irr}_{\min} \left\lfloor \frac{n}{2} \right\rfloor \quad \text{and} \quad \operatorname{irr}_{\min} \leq \operatorname{irr}_{\max} \left\lfloor \frac{n}{2} \right\rfloor.
\]
Then,
\[
\operatorname{irr}_{\min} \leq \Delta\bigl(\operatorname{irr}_{\max} - \left\lfloor \frac{n}{2} \right\rfloor\bigr) \quad \text{and} \quad \operatorname{irr}_{\max} \leq \Delta\bigl(\operatorname{irr}_{\min} - \left\lfloor \frac{n}{2} \right\rfloor\bigr).
\]
Thus, the bound~\eqref{eq2lemmaboundAlbertsonn01} is valid, as desired.
\end{proof}
\subsection{Albertson Index in Trees and Applications}

Lemma~\ref{lemmaboundAlbertsonn1} considers a pair of trees, $T_1$ and $T_2$, with distinct degree sequences: $T_1$ has a degree sequence dominated by vertices of degree 2, resembling a path graph, while $T_2$ is characterized by a degree sequence consisting of prime numbers. Lemma~\ref{lemmaboundAlbertsonn2} focuses on a single tree with general degree properties, introducing a parameterized lower bound that depends on the maximum and minimum degrees, modulated by a scaling factor $\alpha$. In both lemmas, we provide bounds on the Albertson index.
\begin{lemma}~\label{lemmaboundAlbertsonn1}
Let $T_1,T_2$ be a tow trees, where $T_1$ have $\mathscr{D}=(d_1,\dots, d_i)$ an increasing degree sequence occur of $2$. Tree $T_2$ have $\mathscr{B}=(b_1,\dots, b_i)$ an increasing degree sequence of prime number, consider $n_1=|V(T_1)|,n_2=|V(T_2)|$ and $m_1=|E(T_1)|,m_2=|E(T_2)|$. Then, $n_2=n_1-1$, $m_2=m_1-1$ and $\Delta(T_2)=\Delta(T_1)+1$. Then the bound of Albertson index given by: 
\begin{equation}~\label{eq1lemmaboundAlbertsonn1}
\irr_{\min}\leq  5\cdot \frac{n_1\Delta_2^3+n_2\Delta_1^4+m_1\Delta_2^2}{\Delta_1(\Delta_1+\Delta_2)^2} \leq \irr_{\max}.
\end{equation}
\end{lemma} 
\begin{lemma}~\label{lemmaboundAlbertsonn2}
Let $T$ be a tree with $n=|V(T)|,m=|E(T)$  with the maximum degree $\Delta$ and the minimum degree $\delta\geqslant 1$ and $0\leq \alpha \leq 1$. Then, the lower bound of Albertson index is 
 \[\irr(T) \leq \floor*{\frac{3n^2-10n}{2}}2^{\alpha}+\Delta^2-n\delta.\]
\end{lemma}
It is worth noting that the Albertson index has many applications across trees, and one of the most important applications that we refer to in Theorem~\ref{thmalbertsonn3} is through a degree sequence according to a specific condition.
\begin{theorem}~\label{thmalbertsonn3}
    Let  $T$ be a tree of order $n$, a degree sequence is $\mathscr{D}=(d_1,\dots,d_n)$ where $d_n\geqslant \dots \geqslant d_1$, let $M_1(T)$ be the first Zagreb index of $T$. Then, Albertson index of tree $T$ is: 
\begin{equation}~\label{eq1thmalbertsonn3}
 \irr(T)=M_1(T)+\sum_{i=2}^{n-1} d_i+d_n - d_1-2n-2.
\end{equation}
\end{theorem}
\begin{proof}
Let $T = (V, E)$ be a tree with $n$ vertices and $m$ edges, and let $\mathscr{D} = (d_1, \dots, d_n)$ be a degree sequence such that $d_n \geqslant \dots \geqslant d_1$. For a vertex $v_\ell \in V$ whose degree is at least three, to prove~\eqref{eq1thmalbertsonn3}, we need to establish~\eqref{eqzagrebalbertsonn3}.

We note that the first Zagreb index is defined as
\begin{equation} \label{eqzagrebalbertsonn1}
M_1(T) = \sum_{i=1}^n d_i^2 \leq m \left( \frac{2m}{n-1} + n - 2 \right).
\end{equation}

Our goal is to relate this topological index, the first Zagreb index, to the degree sequence in a specific way. For the degree sequence $\mathscr{D} = (d_1, \dots, d_n)$, we have
\begin{equation} \label{eqzagrebalbertsonn2}
\irr(T) = d_1^2 + d_n^2 + \sum_{i=2}^{n-1} d_i^2 + \sum_{i=2}^{n-1} d_i + d_n - d_1 - 2n - 2.
\end{equation}

From~\eqref{eqzagrebalbertsonn1}, it follows that $d_1^2 + d_n^2 \leq \frac{2m}{n-1}$. Since every tree is a connected graph, we also have $M_1(T) \leq m(m+1)$, and it is known that $\irr(T) = m(m+1)$ when $T$ is a star graph. Thus, from~\eqref{eqzagrebalbertsonn1} and~\eqref{eqzagrebalbertsonn2}, we conclude
\begin{equation} \label{eqzagrebalbertsonn3}
M_1(T) = d_1^2 + d_n^2 + \sum_{i=2}^{n-1} d_i^2,
\end{equation}
as desired.
\end{proof}

We completed discussing among Theorem~\ref{Thmcatern2} with degree sequence $\mathscr{D}$ given by etxtend to degree sequence in Theorem~\ref{thmalbertsonn3} as $\mathscr{D}=(d_{i1}, d_{i2}, \dots, d_{ik}, d_{j1}, d_{j2},\dots, d_{j\ell})$ with consideration the term of $\mathscr{D}$ is $d_n > d_1> d_{n-2} >\dots>d_3 > d_2 > d_{n-1}$. 

\begin{theorem}\label{Thmcatern2}
Let $\mathscr{D}=(d_{i1}, d_{i2}, \dots, d_{ik}, d_{j1}, d_{j2},\dots, d_{j\ell})$ be a degree sequences where $\{k,\ell\}\in \mathbb{N}$, with $n\geq 3$ such that $d_n > d_1> d_{n-2} >\dots>d_3 > d_2 > d_{n-1}$. The caterpillar tree with such order has the  maximum value  of $\irr$ among all caterpillar trees with such degrees sequence of path vertices.
\end{theorem}
\begin{proof}
For any degree sequence of length $n$ given by $d_n > d_1 > \dots > d_2 > d_{n-1},$
such that
\[
\irr(G) = (d_n - 1)^2 + (d_1 - 1)^2 + \sum_{i=2}^{n-2} (d_i - 1)(d_i - 2) + \sum_{i=1}^{n-1} |d_i - d_{i+1}|.
\]

Now, we need to prove that for $n+1$, with the sequence $\mathscr{D}$ where $d_{n+1} \geq d_1 \geq \dots \geq d_2 \geq d_n,$
since $d_1 \geq d_n$, we can simplify the expression by modeling the association as $(d_n - 1)^2 = (d_n - 1)(d_n - 2) + (d_n - 1)$. Furthermore, let $d_1 = d_n + r$ where $r \geq 0$ and $r \in \mathbb{N}$, so that 
\[
(d_n - 1)^2 + (d_1 - 1) = d_n^2 - d_n + r.
\]
Also, for the sum $\sum_{i=2}^{n-2} (d_i - 1)(d_i - 2)$, we have
\[
\sum_{i=2}^{n-2} \left( d_i^2 - 3 d_i + 2 \right).
\]
Then, $(d_i - 1)(d_i - 2) = d_i^2 - 3 d_i + 2,$
where we consider $\sum_{i=2}^{n-2} 2 = 2(n-3)$. Thus, we can write:
\[
\sum_{i=2}^{n-2} (d_i - 1)(d_i - 2) = \sum_{i=2}^{n-2} d_i^2 - 3 \sum_{i=2}^{n-2} d_i + 2(n-3).
\]

Therefore,
\[
\irr(G) = (d_n - 1)(d_n - 2) + (d_n - 1) + \sum_{i=2}^{n-2} d_i^2 - 3 \sum_{i=2}^{n-2} d_i + 2(n-3) + \sum_{i=1}^{n-1} |d_i - d_{i+1}|.
\]

We suppose that~\eqref{termcatern1} is maximal for $n$ when $d_n > d_1 > \dots > d_2 > d_{n-1}$. 
The question is why we consider that 
\begin{equation}~\label{termcatern1}
\sum_{i=1}^{n-1} |d_i - d_{i+1}| - \sum_{i=2}^{n-1} d_i
\end{equation}
is maximal under this ordering, and we need to prove that this relationship remains maximal for $n+1$.

In this case, from~\eqref{termcatern1} by comparing $\sum_{i=1}^{n-1} |d_i - d_{i+1}| + (d_1 + d_n)$
with the main relationship
\[
(d_n - 1)(d_n - 2) + (d_n - 1) + \sum_{i=2}^{n-2} d_i^2 - 3 \sum_{i=2}^{n-2} d_i + 2(n-3) + \sum_{i=1}^{n-1} |d_i - d_{i+1}|,
\]
we proceed further. Furthermore, consider the sequence
\[
\mathscr{D} = (d_{i1}, d_{i2}, \dots, d_{ik}, d_{j1}, d_{j2}, \dots, d_{j\ell}),
\]
where $k, \ell \in \mathbb{N}$. We observe that if $d_{i1}, d_{i2}, \dots, d_{ik}$ and $d_{j1}, d_{j2}, \dots, d_{j\ell}$ are even, then $|k - \ell| = 0$, and if they are odd, then $|k - \ell| = 1$.

Here, the vertices of the sequence $\mathscr{D}$ occupy different positions, so the Albertson index in this case is defined as the sum of absolute differences over $k-1$ terms:
\[
\irr_{\mathscr{D}}(T) = |d_{i1} - d_{i2}| + \dots + |d_{ik-1} - d_{ik}| + |d_{j1} - d_{j2}| + \dots + |d_{j\ell-1} - d_{j\ell}|.
\]

The sum $|d_{i1} - d_{i2}| + \dots + |d_{ik-1} - d_{ik}|$ telescopes to $|d_{i1} - d_{ik}|$ only if the sequence is strictly monotonic. However, since the values are absolute differences, direct telescoping is generally prohibited. Thus, we have the inequality $|d_{i1} - d_{ik}| \leq |d_{i1} - d_{i2}| + \dots + |d_{ik-1} - d_{ik}|.$

Similarly, for the second sum, $|d_{j1} - d_{j2}| + \dots + |d_{j\ell-1} - d_{j\ell}|$
telescopes to $|d_{j1} - d_{j\ell}|$ if and only if $|d_{j1} - d_{j\ell}| \leq |d_{j1} - d_{j2}| + \dots + |d_{j\ell-1} - d_{j\ell}|.$ Therefore, according to these terms, we have
\[
|d_{i1} - d_{ik}| + |d_{j1} - d_{j\ell}| \leq |d_{i1} - d_{i2}| + \dots + |d_{ik-1} - d_{ik}| + |d_{j1} - d_{j2}| + \dots + |d_{j\ell-1} - d_{j\ell}|.
\]

By considering $d_{i1} = d_{i,1}$ and $d_{i2} = d_{i,2}$, the simplest and most general expression can be conveniently described using summation notation:
\begin{equation}~\label{termcatern2}
\irr_{\mathscr{D}}(T) = \sum_{m=1}^{k-1} |d_{i,m} - d_{i,m+1}| + \sum_{n=1}^{\ell-1} |d_{j,n} - d_{j,n+1}|.
\end{equation}

Assume that $\sum_{i=1}^{n-1} |d_i - d_{i+1}| + d_1 + d_n$
is maximal when $d_n > d_1 > \dots > d_2 > d_{n-1}.$
Therefore,
\[
\sum_{i=1}^{n-1} |d_i - d_{i+1}| + d_1 + d_{n+1}
\]
is maximal when $d_n > d_1 > \dots > d_2 > d_{n-1}.$ We notice that $d_1 + d_{n+1}$ attains its maximum value when $d_{n+1} \geq d_1 \geq \dots \geq d_2 \geq d_n.$

Hence, we need to prove that $\sum_{i=1}^{n-1} |d_i - d_{i+1}|$ is maximal under the same ordering. Thus,
\begin{align*}
\irr(G) &= \sum_{i=1}^{n-1} |d_i - d_{i+1}| + d_1 + d_{n+1} \\
&= |d_1 - d_2| + |d_2 - d_3| + \dots + |d_{n-1} - d_n| + d_1 + d_{n+1}.
\end{align*}

Now, from~\eqref{termcatern1} and \eqref{termcatern2} we consider two cases to determine the maximum value of $\sum_{i=1}^{n-1} |d_i - d_{i+1}|$ according to $\irr(G)$:

\case{1} If $d_i \geq d_{i+1},$ then $\irr(G) = 2 d_1 - d_n + d_{n+1}$ is maximal. Note that $2 d_1$ arises from combining the two $d_1$ terms, and $d_n + d_{n+1}$ is maximal when $d_{n+1} \geq d_1 \geq \dots \geq d_2 \geq d_n.$

\case{2} If $d_i \leq d_{i+1},$ then $\irr(G) = d_n + d_{n+1}$ is maximal, so in this case, the term $\sum_{i=1}^{n-1} |d_i - d_{i+1}|$  is combined with the maximum value $d_1 + d_{n+1}.$

Therefore,
\[
\irr(G) = 
\begin{cases}
2 d_1 - d_n + d_{n+1}, & \text{if } d_i \geq d_{i+1}, \\
d_n + d_{n+1}, & \text{if } d_i \leq d_{i+1},
\end{cases}
\]
is maximal when $d_{n+1} \geq d_1 \geq \dots \geq d_2 \geq d_n,$ 
and $d_1 + d_{n+1}$ is maximal.

This completes the proof.
\end{proof}

In Hypothesize~\ref{hyfibn1}, we provide an application of Albertson index on Fibonacci numbers satisfy caterpillar tree.
\begin{hypothesize}~\label{hyfibn1}
Let $T$ be a tree of order $n$, let $F_n$ be a Fibonacci number with $n>2$. Let $\mathscr{D}=(F_3,\dots,F_n)$ be a degree sequence of Fibonacci number. Then, Albertson index of $T$ among $\mathscr{D}$ is 
\[
\irr(T)=\sum_{i=3}^{n-1}F_i+\sum_{i=5}^{n-1}(F_i-2)\lvert F_i-1\rvert+\lvert F_4-1\rvert+(F_n-1)\lvert F_n-1\rvert+2.
\]
\end{hypothesize}
\begin{proof}
Let $\mathscr{D}=(F_3,\dots,F_n)$ be a degree sequence of Fibonacci number, then we have
\begin{equation}~\label{eqfibn1}
\lvert F_3-F_4\rvert+\dots+\lvert F_{n-1}-F_n\rvert=\sum_{i=3}^{n-1}F_i+2.  
\end{equation}
According to Albertson index for every vertex, by considering the constant term is $\lvert F_4-1\rvert+(F_n-1)\lvert F_n-1\rvert$, we noticed that:
\begin{equation}\label{eqfibn02}
    (F_5-2)\lvert F_5-1\rvert+(F_6-2)\lvert F_6-1\rvert+\dots+(F_{n-1}-2)\lvert F_{n-1}-1\rvert=\sum_{i=5}^{n-1}(F_i-2)\lvert F_i-1\rvert.
\end{equation}
Now, by applying~\eqref{eqfibn1},\eqref{eqfibn02} to Albertson index, we obtain 
\begin{equation}~\label{eqfibn3}
\irr(T)=\sum_{i=3}^{n-1}F_i+\sum_{i=5}^{n-1}(F_i-2)\lvert F_i-1\rvert+\lvert F_4-1\rvert+(F_n-1)\lvert F_n-1\rvert+2.
\end{equation}
As desire.
\end{proof}
\begin{example}
Let $T$ be a tree of order $n$. Let $\mathscr{D}=(F_3,\dots,F_{10})$ be a degree sequence of Fibonacci number, where $F_1=1, F_2=2$ and $F_3=3$ with $F_n=F_{n-1}+F_{n-2}$, then in Figure~\ref{figdegseqfibnumn1} we observed that, then Albertson index according to~\eqref{eqfibn1} is:
\begin{equation}~\label{eqexfibn1}
F_3+\dots+F_{11}=1\sum_{i=3}^{10}F_i+2=143    
\end{equation}
Then, from~\eqref{eqfibn3},\eqref{eqexfibn1} we present Albertson index as
\[
\irr(T)=143+4430+7746=12319.
\]
\begin{figure}[H]
    \centering
    \begin{tikzpicture}[scale=1.1]
\draw   (7,5)-- (7,6);
\draw   (7,6)-- (8,7);
\draw   (8,7)-- (9,8);
\draw   (9,8)-- (10,9);
\draw   (10,9)-- (11,10);
\draw   (11,10)-- (12,9);
\draw   (12,9)-- (13,8);
\draw   (13,8)-- (14,7);
\draw   (14,7)-- (15,6);
\draw   (8,7)-- (8,6);
\draw   (9,8)-- (8.537909109638452,7.0048314346548555);
\draw   (9,8)-- (9,7);
\draw   (9,8)-- (9.564119435380462,7.023835329576004);
\draw   (10,9)-- (9.48810385569587,8.031041760396866);
\draw   (10,9)-- (10.381286916989843,7.974030075633421);
\draw (9.640135015065058,8.2) node[anchor=north west] {$\dots$};
\draw   (12,9)-- (11,8);
\draw   (12,9)-- (12,8);
\draw (11.179450503678076,8.2) node[anchor=north west] {$\dots$};
\draw   (13,8)-- (12,7);
\draw   (13,8)-- (13,7);
\draw (12.224664724341235,7.2) node[anchor=north west] {$\dots$};
\draw   (14,7)-- (13,6);
\draw   (14,7)-- (14,6);
\draw (13.250875050083247,6.1) node[anchor=north west] {$\dots$};
\draw   (15,6)-- (14,5);
\draw   (15,6)-- (15,5);
\draw (14.220073691061812,5.2) node[anchor=north west] {$\dots$};
\draw (6.219433929258355,6.5867457463895915) node[anchor=north west] {$F_3$};
\draw (7.340663729606107,7.726979441658493) node[anchor=north west] {$F_4$};
\draw (8.34787016042697,8.715181977558206) node[anchor=north west] {$F_5$};
\draw (9.222049326799794,9.627368933773328) node[anchor=north west] {$F_6$};
\draw (10.495310286516736,10.729594839199931) node[anchor=north west] {$F_7$};
\draw (12.034625775129753,9.874419567748257) node[anchor=north west] {$F_8$};
\draw (13.155855575477505,8.867213136927393) node[anchor=north west] {$F_9$};
\draw (14.049038636771478,7.917018390869976) node[anchor=north west] {$F_{10}$};
\draw (15.227280121882675,6.757780800679927) node[anchor=north west] {$F_{11}$};
\draw (10.685349235728218,9.532349459167586) node[anchor=north west] {$\dots$};
\begin{scriptsize}
\draw [fill=black] (7,5) circle (1.5pt);
\draw [fill=black] (7,6) circle (1.5pt);
\draw [fill=black] (8,7) circle (1.5pt);
\draw [fill=black] (9,8) circle (1.5pt);
\draw [fill=black] (10,9) circle (1.5pt);
\draw [fill=black] (11,10) circle (1.5pt);
\draw [fill=black] (12,9) circle (1.5pt);
\draw [fill=black] (13,8) circle (1.5pt);
\draw [fill=black] (14,7) circle (1.5pt);
\draw [fill=black] (15,6) circle (1.5pt);
\draw [fill=black] (8,6) circle (1.5pt);
\draw [fill=black] (8.537909109638452,7.0048314346548555) circle (1.5pt);
\draw [fill=black] (9,7) circle (1.5pt);
\draw [fill=black] (9.564119435380462,7.023835329576004) circle (1.5pt);
\draw [fill=black] (9.48810385569587,8.031041760396866) circle (1.5pt);
\draw [fill=black] (10.381286916989843,7.974030075633421) circle (1.5pt);
\draw [fill=black] (11,8) circle (1.5pt);
\draw [fill=black] (12,8) circle (1.5pt);
\draw [fill=black] (12,7) circle (1.5pt);
\draw [fill=black] (13,7) circle (1.5pt);
\draw [fill=black] (13,6) circle (1.5pt);
\draw [fill=black] (14,6) circle (1.5pt);
\draw [fill=black] (14,5) circle (1.5pt);
\draw [fill=black] (15,5) circle (1.5pt);
\end{scriptsize}
\end{tikzpicture}
    \caption{Degree sequence of Fibonacci number}
    \label{figdegseqfibnumn1}
\end{figure}
\end{example}

\section{On Bipartite Graph with Topological Indices}~\label{sec4}
In this section, we delineate certain critical thresholds for both indices: the Sigma Index and the Albertson Index within the bivariate graphical representation. These demarcations hold substantial significance, particularly in the rigorous identification and demarcation of anomalous extremities for each index.

\begin{lemma}~\label{boundbipartitegraphn5}
Let $\mathscr{G}$ be a bipartite graph with $n_1=|V_1(\mathscr{G})|$ and $n_2=|V_2(\mathscr{G})|$, let $\delta\geqslant 2$ be the minimum degree of $\mathscr{G}$. Then, the upper bound of Sigma index satisfy
\begin{equation}~\label{eq1boundbipartitegraphn5}
\sigma(\mathscr{G}) \geqslant \floor*{\dfrac{n_1n_2}{3}}+\Delta(\delta-1)^2+\frac{n_1\Delta+n_2(\delta-1)^2+n_2\Delta+n_1(\delta-1)^3}{n_1n_2+\delta}.
\end{equation}
\end{lemma}
\begin{proof}
Assume $\mathscr{G}$ is a bipartite graph with vertex sets $V_1(\mathscr{G})$ and $V_2(\mathscr{G})$, where $n_1 = |V_1(\mathscr{G})|$ and $n_2 = |V_2(\mathscr{G})|$. Suppose $\delta \geqslant 2$ and consider the condition $\Delta^2 \leq n_1^2 - 2$, where $\delta(n_1^2 - 2) = \delta(n_2^2 - 2)$. Then, the following holds:
\begin{equation}\label{eq2boundbipartitegraphn5}
\sum_{v \in V_1(\mathscr{G})}  d(v) + \sum_{u \in V_2(\mathscr{G})}  d(u) = \delta(n_1^2 - 2).
\end{equation}
From \eqref{eq2boundbipartitegraphn5}, the vertex sets $V_1(\mathscr{G})$ and $V_2(\mathscr{G})$ satisfy
\[
\left\lfloor \frac{n_1 n_2}{3} \right\rfloor \geqslant n_2 (n_1 - \delta) - \delta (n_1 - 2).
\]
When $\mathscr{G}$ is a complete bipartite graph, the Sigma index is given by $\sigma(\mathscr{G}) = n_1 n_2 (n_1 - n_2)^2.$
Thus, we obtain
\begin{equation}\label{eq3boundbipartitegraphn5}
\sigma(\mathscr{G}) > n_2 (n_1 - \delta) - \delta (n_1 - 2) + \left\lfloor \frac{n_1 n_2}{3} \right\rfloor.
\end{equation}
Assuming $\Delta (\delta - 1)^2 < n_2 (n_1 - \delta)$ and $\Delta < \max\{n_1, n_2\}$, \eqref{eq3boundbipartitegraphn5} holds for the term $\Delta (\delta - 1)^2$. This yields
\[
\sigma(\mathscr{G}) > n_2 (n_1 - \delta) + \Delta (\delta - 1)^2 + n_1 (n_2 - \delta).
\]
Considering the term $n_1 n_2 + \delta$, we establish the bound
\begin{equation}\label{eq4boundbipartitegraphn5}
\sigma(\mathscr{G}) \geqslant n_1 n_2 + \delta + \Delta (\delta - 1)^2 + \left\lfloor \frac{n_1 n_2}{3} \right\rfloor.
\end{equation}
From \eqref{eq3boundbipartitegraphn5} and \eqref{eq4boundbipartitegraphn5}, we observe that the term $n_1 \Delta + n_2 (\delta - 1)^2 + n_2 \Delta + n_1 (\delta - 1)^3$
satisfies
\[
n_1 \Delta + n_2 (\delta - 1)^2 \leqslant n_1 n_2 + \delta \quad \text{and} \quad n_2 \Delta + n_1 (\delta - 1)^3 \leqslant n_1 n_2 + \delta,
\]
 by implying that 
\[
n_1 \Delta + n_2 (\delta - 1)^2 + n_2 \Delta + n_1 (\delta - 1)^3 \geqslant n_1 n_2 + \delta.
\]
Moreover, we have
\[
\frac{n_1 \Delta + n_2 (\delta - 1)^2 + n_2 \Delta + n_1 (\delta - 1)^3}{n_1 n_2 + \delta} \leqslant \Delta (n_1 - \delta).
\]
If $\mathscr{G}$ is an incomplete bipartite graph, we establish the bound
\[
\sigma(\mathscr{G}) \leqslant \Delta^3 (\Delta - 1) + \delta (\delta - 1)^3 + \frac{\Delta^3 (\Delta - 1) + 4 n_2^2}{n_1 n_2 + 3 \delta^2}.
\]
Thus, the relations \eqref{eq2boundbipartitegraphn5}--\eqref{eq4boundbipartitegraphn5} remain satisfied for the upper bound of the Sigma index, where
\[
\delta^2 \leqslant \sigma(\mathscr{G}) \leqslant \delta \Delta^2,
\]
and it holds that
\[
\sqrt{2 n_1 \delta^2} \leqslant \sigma(\mathscr{G}) \leqslant \sqrt{2 n_1 n_2 \delta \Delta^2}.
\]
Taking into account \eqref{eq2boundbipartitegraphn5}, the required relation \eqref{eq1boundbipartitegraphn5} is satisfied in both the complete and incomplete bipartite graph.

This completes the proof.

\end{proof}

In fact, Theorem~\ref{sharpbound} provide us an important result of the bound with $\sigma_{\max}$ on $\mathscr{G}$. 
\begin{theorem}~\label{sharpbound}
Let $\mathscr{G}$ be a bipartite graph, with $n_1 = |V_1(\mathscr{G})|$ and $n_2 = |V_2(\mathscr{G})|$. The maximum value of the Sigma index is bounded by $n_1^4 + n_2^4$ if and only if $\mathscr{G}$ has a sharp bound.
\end{theorem}

Assume $\theta(n^2)=n_1^4 + n_2^4$ and $\theta(n)=n_1^2 + n_2^2$, then, by Theorem~\ref{sharpboundn2} we presented the upper bound and the lower bound of $\sigma$. 

\begin{theorem}~\label{sharpboundn2}
If $\mathscr{G}$ has a sharp bound $\sigma_{\max}(\mathscr{G})$, then when $\irr(\mathscr{G}) \leqslant \sigma(\mathscr{G}) \leqslant \theta(n^2)$, the upper bound of the Sigma index satisfies 
\begin{equation} \label{eq1sharpboundn2}
\sigma(\mathscr{G}) \geqslant \irr(\mathscr{G}) + \theta(n).
\end{equation}
Similarly, the lower bound of the Sigma index satisfies
\begin{equation} \label{eq2sharpboundn2}
\sigma(\mathscr{G}) \leqslant \irr(\mathscr{G}) + \theta(n^2).
\end{equation}
\end{theorem}
\begin{proof}
Suppose $\theta(n^2) = n_1^4 + n_2^4$ and $\theta(n) = n_1^2 + n_2^2$, where the Albertson index on $\mathscr{G}$ satisfies  
\[
\sqrt{\sigma(\mathscr{G})} \leq \irr(\mathscr{G}) \leq \sqrt{(n_1+n_2) \sigma(\mathscr{G}) + n_1^2 + n_2^2}.
\]  
According to Theorem~\ref{sharpbound}, we find that the maximum value of the Sigma index is bounded by $\theta(n^2)$, and this holds for the Sigma index bounded by $\theta(n^2)$.

\case{1} Assume $\sigma(\mathscr{G}) \geqslant \irr(\mathscr{G}) + \theta(n)$ is not the upper bound of the Sigma index. Then, we notice that  
\[
\sqrt{\sigma_{\min}(\mathscr{G})} \leq \irr(\mathscr{G}) \leq \sqrt{(n_1 + n_2)\sigma_{\max}(\mathscr{G})}.
\]  
Therefore, we have  
\begin{equation} \label{eq3sharpboundn2}
\sigma_{\max}(\mathscr{G}) \geqslant \theta(n^2), \quad \sigma_{\min}(\mathscr{G}) \geqslant \theta(n).
\end{equation}  
Thus, we note that $\sigma_{\min}(\mathscr{G}) \leqslant \theta(n^2)$. Additionally, for $\delta \geqslant 3$, the Sigma index satisfies the bound  $\sigma(\mathscr{G}) > n_2 (n_1 - \delta) + n_1 (n_2 - \delta).$ 
Then,  
\[
\sigma(\mathscr{G}) - \irr(\mathscr{G}) > n_1^4 \quad \text{and} \quad \sigma(\mathscr{G}) - \irr(\mathscr{G}) > n_2^4.
\]  
Thus, it should be $\sigma(\mathscr{G}) - \irr(\mathscr{G}) \leqslant \theta(n^2).$ 
That means $\sigma(\mathscr{G}) \geqslant \irr(\mathscr{G}) + \theta(n)$ is the upper bound of the Sigma index.

\case{2} In this case, we need to prove that~\eqref{eq2sharpboundn2} is the lower bound of the Sigma index given the term.  
If $\mathscr{G}$ is a complete bipartite graph, it satisfies  $\irr(\mathscr{G}) = n_1^2 n_2 - n_2^2 n_1.$ 
Then, according to~\eqref{eq3sharpboundn2}, we find that  $\sigma(\mathscr{G}) \geqslant \theta(n^2).$ 
Thus, by considering  $\sigma(\mathscr{G}) = n_1 n_2 (n_1 - n_2)^2,$ 
we have the bound~\eqref{eq4sharpboundn2}, which presents the relationship with the Albertson index as  
\begin{equation} \label{eq4sharpboundn2}
\sigma_{\min}(\mathscr{G}) \leqslant (n_1 + n_2) \irr(\mathscr{G}) \leqslant \sigma_{\max}(\mathscr{G}) + \theta(n^2).
\end{equation}  
Now, according to Case 1, we find that~\eqref{eq1sharpboundn2} is the upper bound of the Sigma index satisfying  $\sigma(\mathscr{G}) \geqslant \irr(\mathscr{G}) + \theta(n).$
Then, when $\sigma(\mathscr{G}) \geqslant \theta(n^2)$, it holds that  
\[
\sigma(\mathscr{G}) \leqslant (n_1 + n_2) \irr(\mathscr{G}) + \theta(n^2).
\]  
Thus, we have  
\begin{equation} \label{eq5sharpboundn2}
\sigma(\mathscr{G}) \leqslant (n_1 + n_2) \irr(\mathscr{G}) + \theta(n).
\end{equation}  
Actually, both~\eqref{eq4sharpboundn2} and \eqref{eq5sharpboundn2} remain correct when $\mathscr{G}$ is an incomplete bipartite graph. In this case, the bound holds as  
\begin{equation} \label{eq6sharpboundn2}
\sigma(\mathscr{G}) \leqslant (n_1 |n_1 - n_2| + n_2 |n_1 - n_2|) \irr(\mathscr{G}) + \theta(n).
\end{equation}  
Therefore, from~\eqref{eq4sharpboundn2},~\eqref{eq5sharpboundn2}, and~\eqref{eq6sharpboundn2}, it follows that~\eqref{eq2sharpboundn2} is valid.

\end{proof}

\section{Conclusion}\label{sec5}
Through this paper, the study of the extreme values (maximum and minimum) for each index requires certain conditions. According to Proposition~\ref{boundalbertsonn02},  the upper bound (the lower bound) of the minimum Albertson index is: 
    \begin{equation*}
\irr_{\min}\geqslant \frac{(\Delta-2)^3}{n\Delta-\delta}, \quad \irr_{\min}\leqslant \frac{\delta}{\Delta+1}n\Delta^2+\frac{\Delta^2(\Delta-\delta)}{6\delta(\Delta-1)}.
\end{equation*}
Through Proposition~\ref{proboundmindel} we established the maximum value with the minimum degree $\delta$ with condition $2(\delta-1)(2\delta-1)>0$. Lemma~\ref{lemmaboundAlbertsonn2} focuses on a single tree with general degree properties, introducing a parameterized lower bound that depends on the maximum and minimum degrees, modulated by a scaling factor $\alpha$, where the lower bound of Albertson index is 
 \[\irr(T) \leq \floor*{\frac{3n^2-10n}{2}}2^{\alpha}+\Delta^2-n\delta.\] 
The investigation of topological indices across arboreal structures, through the discernment of aberrant values, constitutes the theoretical underpinning for myriad pragmatic applications, a salient exemplar of which we have delineated in the present study.

\section*{Acknowledgements}
The authors would like to express their sincere gratitude to the anonymous reviewers for their insightful comments and constructive suggestions, which have substantially enhanced the clarity and quality of this manuscript.

\section*{Declarations}
\begin{itemize}
	\item Funding: Not Funding.
	\item Conflict of interest/Competing interests: The author declare that there are no conflicts of interest or competing interests related to this study.
	\item Ethics approval and consent to participate: The author contributed equally to this work.
	\item Data availability statement: All data is included within the manuscript.
\end{itemize}

\end{document}